\newcommand\MM{\mathcal{M}}
\newcommand\MS{\mathcal{S}}
\newcommand{\ignore}[1]{}
\newcommand\al\alpha
\newcommand\la\lambda
\newcommand\de\delta
\newcommand\ep\varepsilon
\newcommand\ph\varphi
\newcommand\si\sigma
\newcommand\ta\tau
\newcommand\ps\psi
\newcommand\Ph\Phi
\newcommand\Ps\Psi
\theoremstyle{definition}
\newtheorem{thm}{Theorem}
\newtheorem{lemma}[thm]{Lemma}
\newtheorem{pro}[thm]{Proposition}
\newtheorem*{rep@theorem}{\rep@title}
\newcommand{\newreptheorem}[2]{%
\newenvironment{rep#1}[1]{%
 \def\rep@title{#2 \ref{##1}}%
 \begin{rep@theorem}}%
 {\end{rep@theorem}}}
\title{On the hardness of deciding the finite convergence of Lasserre hierarchies}
\author[Luis Felipe Vargas]{Luis Felipe Vargas}
\address{Istituto Dalle Molle di Studi sull'Intelligenza Artificiale (IDSIA), Switzerland}
\email{luis.vargas@idsia.ch}
\subjclass{ }
\subjclass{ 90C23, 90c20, 68Q17, 11E25}
\date{\today}
\keywords{Lasserre relaxation, positive polynomial, sum of squares, polynomial optimization, semidefinite programming, complexity, Motzkin Straus, stability number, standard quadratic program}
\begin{document}
\begin{abstract}
\begin{itemize}
A polynomial optimization problem (POP) asks for minimizing a polynomial function given a finite set of polynomial constraints (equations and inequalities). This problem is well-known to be hard in general, as it encodes many hard combinatorial problems. The Lasserre hierarchy is a sequence of semidefinite relaxations for solving (POP).  Under the standard archimedean condition, this hierarchy is guaranteed to converge asymptotically to the optimal value of (POP) (Lasserre, 2001) and, moreover, finite convergence holds generically (Nie, 2012). In this paper, we aim to investigate whether there is an efficient algorithmic procedure to decide whether the Lasserre hierarchy of (POP) has finite convergence. We show that unless P=NP there cannot exist such an algorithmic procedure that runs in polynomial time. We show this already for the standard quadratic programs. Our approach relies on characterizing when finite convergence holds for the so-called Motzkin-Straus formulation (and some variations of it) for the stability number of a graph.
\end{itemize}
\end{abstract}

\maketitle

\section{Introduction}
A polynomial optimization problem asks for minimizing a polynomial function subject to a set of polynomial constraints (equations and inequalities). More precisely, given a polynomial $f$, and two sets of polynomials ${\bf{g}}=\{g_1, \dots, g_m\}$ and ${\bf{h}}=\{h_1, \dots, h_l\}$,  the task is to find (or approximate) the infimum of the following problem:
\begin{align}\label{POP}\tag{POP}
f^*=\inf_{x\in K} f(x) ,
\end{align}
where 
\begin{align}\label{K}
K=\Big\{x\in \mathbb{R}^n: g_i(x)\geq 0 \text{ for } i=1, \dots, m \text{ and } h_i(x)=0 \text{ for } i=1,\dots, l\Big\}.
\end{align}
Polynomial optimization captures many hard problems in many areas such as combinatorial optimization, portfolio and energy optimization (see, for example, \cite{las09} and references therein). Consequently, solving (\ref{POP}) is hard in general. A common approach for approximating (and solving) (\ref{POP}) is given by the so-called Lasserre sum-of-squares hierarchy, which is a sequence of semidefinite programs of increasing size as we now describe. 
\\

 We first introduce some notation. Throughout the article $\mathbb{N}=\{1, 2, \dots, \}$ and $\mathbb{R}$ will denote, respectively, the set of positive integers and real numbers, and for an integer $n\in \mathbb{N}$, we set $[n]=\{1,2,\dots, n\}$. We denote by $\mathbb{R}[x]:=\mathbb{R}[x_1, x_2, \dots, x_n]$ the ring of polynomials with real coefficients in the variables $x_1, \dots, x_n$. For any $d\in \mathbb{N}$, $\mathbb{R}[x]_d$ denotes the set of polynomials with degree at most $d$. A polynomial $p$ is a \textit{sum of squares} if we have $p=\sum_{i=1}^mq_i^2$ for some other polynomials $q_1, \dots, q_m$.  The cone of sums of squares is denoted by $\Sigma$. 
We let $\Sigma_r=\Sigma\cap \mathbb{R}[x]_r$.  For an integer $r\in \mathbb{N}$, the \textit{$r$-th truncated quadratic module} generared by the set of polynomials ${\bf{g}}=\{g_1, \dots, g_m\}$ is defined as 
$$\MM({\bf{g}})_r:=\Big\{\sum\limits_{i=0}^{n} \sigma_ig_i: \sigma_i\in \Sigma, \text{deg}(\sigma_ig_i)\leq r \text{ for } i=0,1,\dots, m, \text{ and } g_0=1\Big\}.$$
The union
$$\MM({\bf{g}}):= \bigcup_{r=0}^{\infty} \MM({\bf{g}})_r$$
is called the {\em quadratic module} generated by ${\bf{g}}$. Given a set of polynomials \linebreak ${\bf{h}}=\{h_1, \dots, h_l\}$, the {\em $r$-th truncated ideal} generated by ${\bf{h}}$ is defined as 
$$I({\bf{h}})_r:=\Big\{\sum_{i=1}^l q_ih_i: q_i\in \mathbb{R}[x], \text{deg}(q_ih_i)\leq r \text{ for } i=1,2,\dots, l\Big\}.$$
The ideal generated by ${\bf{h}}$ is the union 
$$I({\bf{h}})=\bigcup_{r=0}^\infty I({\bf{h}})_r.$$


The {\em Lasserre sum-of-squares hierarchy} of problem (\ref{POP}) \cite{las01} is defined as the following sequence of relaxations: for $r\in \mathbb{N}$,
\begin{align}
f^{(r)}=\sup\{\lambda: f-\lambda \in M({\bf{g}})_r + I({\bf{h}})_r\}.
\end{align}
Clearly, for all $r\geq 1$, we have $f^{(r)}\leq f^{(r+1)}\leq f^*$. We say that the sets of polynomials ${\bf{g}}=\{g_1, \dots, g_m\}$ and ${\bf{h}}=\{h_1, \dots, h_l\}$ satisfy the \textit{archimedean condition} if we have 
\begin{align}\label{arch}
N-\sum_{i=1}^nx_i^2\in \MM({\bf{g}}) + I({\bf{h}}) \quad \text{ for some $N\in \mathbb{N}$.}
\end{align}
If the archimedean condition is satisfied, then the hierarchy $f^{(r)}$ converges asymptotically to $f^*$ as $r\to \infty$. This is a consequence of the celebrated \textit{Putinar's Positivstellensatz} \cite{put}. 

We say that the Lasserre hierarchy $f^{(r)}$ has {\em finite convergence} if 
$$f^{(r)}=f^* \text{ for some $r\in \mathbb{N}$.}$$ 
Finite convergence does not hold in general (see, e.g., Example 6.19 in \cite{lau09}). However, it has been shown that finite convergence holds under some mild conditions. We recall some known results from the literature. Laurent \cite{lau07} showed that if the set of common {\em complex} zeros of $h_1, \dots, h_l$ is finite  (i.e, if the set \linebreak $\{x\in \mathbb{C}^n: h_1(x)=\dots=h_l(x)=0\}$ is finite), then the Lasserre hierarchy $f^{(r)}$ has finite convergence. Later, Nie \cite{nie13} extended this result to the case when the set of common {\em real} zeros is finite. That is, if $\{x\in \mathbb{R}^n: h_1(x)=\dots=h_l(x)=0\}$ is finite, then the Lasserre hierarchy $f^{(r)}$ has finite convergence. In addition, Nie \cite{nie12} showed that under the archimedean condition the Lasserre hierarchy has finite convergence {\em generically} (here, {\em generically} means that the property holds in the entire space of input data except on a set of Lebesgue measure zero). It has also been observed by Henrion and Lasserre \cite{hl03, hl05} that finite convergence holds in many numerical experiments.

The above results raise the question of whether it is possible to fully identify the cases in which finite convergence holds (or fails). More specifically, (given $f$, ${\bf{g}}$, and ${\bf{h}}$) does there exist an algorithmic procedure for deciding if the Lasserre hierarchy of problem (\ref{POP}) has finite convergence. We show that such a procedure cannot run in polynomial time unless P=NP. This is the main result of this paper. 

 \begin{thm}\label{ref}
The problem of deciding whether the Lasserre hierarchy of a polynomial optimization problem has finite convergence is NP-hard. 
\end{thm}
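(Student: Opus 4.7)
The plan is to prove NP-hardness by polynomial reduction from a known NP-hard graph problem, specifically deciding whether the stability number $\alpha(G)$ of a given graph $G$ is at least a given integer $k$. The reduction will run through the Motzkin-Straus formulation, which expresses $\alpha(G)$ as the optimal value of a standard quadratic program, and thus produces a POP in a very restricted class (quadratic objective, linear equality, non-negativity inequalities, archimedean).

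First, I would fix a Motzkin-Straus-type POP for $\alpha(G)$: minimize a quadratic polynomial $p_G$ built from the adjacency matrix of $G$ over the simplex $\Delta_n = \{x \geq 0 : \sum_i x_i = 1\}$. The constraints are $g_i(x) = x_i \geq 0$ and the single equation $h(x) = \sum_i x_i - 1 = 0$; the archimedean condition is automatic so the Lasserre hierarchy converges asymptotically by Putinar.

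Next, the crux: I would give a graph-theoretic characterization of when the Lasserre hierarchy for this POP has finite convergence. The tools here are the standard moment/SOS duality combined with flat-truncation results of Nie and the explicit description of the Motzkin-Straus minimizers. Crucially, Motzkin-Straus is known to have a large family of \emph{spurious} global minimizers (supported not just on maximum stable sets but also on proper substable-set structures), and finite convergence is sensitive to the shape of this minimizer set. I would aim for a characterization of the form \emph{the Lasserre hierarchy has finite convergence if and only if $G$ satisfies a combinatorial property $P(G)$}, where $P(G)$ encodes enough structural information about the maximum stable sets of $G$ that deciding it is NP-hard. A natural candidate for $P(G)$ is that $\alpha(G)$ equals a specific value determined by the input size, or that every vertex lies in some maximum stable set, or that no spurious support can be glued into a KKT point --- any of these, if equivalent to finite convergence, already encodes the stability number.

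Finally, assuming such a characterization, I would complete the reduction by showing that $P(G)$ is NP-hard via a polynomial reduction from the stability number problem, possibly using small gadgets (disjoint unions with cliques, pendant vertices, or graph products) to fine-tune the structure of maximum stable sets so that $P(G)$ aligns exactly with the target property. The main obstacle is clearly the middle step: extracting a combinatorial characterization of finite convergence that is simultaneously strong enough to be NP-hard and clean enough to be provable. If the characterization is too coarse the reduction fails; if too rigid it may not be tight against any known NP-hard property. Navigating between these two extremes, and in particular controlling the spurious Motzkin-Straus optimizers through a suitable perturbation of the objective if necessary, is where the substance of the proof lies.
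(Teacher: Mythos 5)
Your outline matches the paper's architecture at a high level -- reduce from the stability number via a Motzkin--Straus-type standard quadratic program, find a combinatorial property equivalent to finite convergence, and show that property is NP-hard -- and you even anticipate that a perturbation of the objective may be needed to control the spurious minimizers. But the proposal stops exactly where the mathematical content begins: you explicitly leave the ``combinatorial property $P(G)$'' as an unspecified placeholder, offering only candidate guesses (every vertex in a maximum stable set, a prescribed value of $\alpha(G)$, etc.), none of which is the right one. The paper's property is: \emph{a designated edge $e$ is critical in a twin-free graph $G$} (i.e., $\alpha(G\setminus e)=\alpha(G)+1$), and the perturbed objective is $x^T(A_G+I+A_{G\setminus e})x$, which doubles the weight of every edge except $e$. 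The characterization (Proposition \ref{main-e} / Theorem \ref{theo-finite-convergence-w}) states that the hierarchy for this program converges finitely if and only if $e$ is \emph{not} critical. Neither direction of this equivalence is present in your proposal. The ``if'' direction invokes Nie's theorem after verifying (CQC), (SCC), (SOSC) at all minimizers, which in turn rests on the precise description of the minimizers of (\ref{motzkin-straus-p}) (Theorem \ref{theorem-minimizers}). The ``only if'' direction is the genuinely new argument: one takes the one-parameter family of minimizers $u_t=\frac{1}{\alpha(G)}(t\chi^{S\cup\{l\}}+(1-t)\chi^{S\cup\{m\}})$ arising from a critical edge $\{l,m\}$, substitutes $x+u_t$ into the putative sum-of-squares certificate, and compares low-degree coefficients to force a sum of squares in $t$ to equal a polynomial of degree $1$ in $t$ -- a contradiction. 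This step uses the twin-free hypothesis in an essential way (to find a vertex $j$ adjacent to $l$ but not $m$), which is why the combinatorial problem must be CRITICAL-EDGE restricted to twin-free graphs.

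The second missing piece is the NP-hardness of deciding edge criticality in twin-free graphs. Your plan gestures at ``gadgets'' but gives none; the paper's reduction is an iterative one: given an oracle for criticality, repeatedly contract twin pairs (which preserves $\alpha$) and delete edges (recording whether each deleted edge was critical, since $\alpha$ increases by one exactly when it was), until no edges remain, at which point $\alpha$ of the final edgeless graph is known and $\alpha(G)$ can be recovered by bookkeeping. Without both of these lemmas the reduction does not exist, so as written the proposal is a research plan rather than a proof. In particular, be aware that the plain Motzkin--Straus objective $x^T(A_G+I)x$ would \emph{not} serve your purpose directly: its finite convergence is governed by whether the twin-contracted graph has \emph{any} critical edge (Theorem \ref{main-2}), and it is the single-edge perturbed version that cleanly isolates an NP-hard decision about one edge.
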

We remark that our complexity result holds, in fact, under the archimedean condition, and more specifically for the class of standard quadratic programs, i.e., when $f$ is a quadratic function and $K=\Delta_n$ is the standard simplex (when \linebreak ${\bf{g}}=\{x_1, x_2, \dots, x_n\}$, and ${\bf{h}}=\{\sum_{i=1}^nx_i-1\}$).

Our approach is based on considering variations of the so-called Motzkin-Straus formulation, which is a quadratic optimization problem for computing (the inverse of) the stability number of a graph, as we briefly now describe. Our results also permit to fully characterize the graphs for which the Lasserre hierarchy of the (original) Motzkin-Straus formulation has finite convergence. This forms the second main result of this article (see Theorem \ref{main-2}).
\subsection{Strategy of the proof}
Given a graph $G=(V=[n],E)$, a subset of vertices $S\subseteq V$ is called {\em stable} in $G$ if $\{i,j\}\notin E$ for all distinct $i,j\in S$. The {\em stability number} of $G$, denoted by $\alpha(G)$, is the maximum cardinality of a stable set in $G$. Computing $\alpha(G)$ is a well-known NP-hard problem \cite{karp}. The starting point of our approach is to consider the following formulation of $\frac{1}{\alpha(G)}$ as minimization of a quadratic function over the standard simplex 
$$\Delta_n=\Big\{x\in \mathbb{R}^n: \sum_{i=1}^nx_i=1, x_i\geq 0 \text{ for } i\in [n]\Big\}.$$ 
This formulation is due to Motzkin and Straus \cite{ms}. Let $G=([n], E)$ be a graph. Then we have
\begin{align}\label{motzkin-straus}\tag{M-S}
    \frac{1}{\alpha(G)}=\min\Big\{x^T(A_G+I)x: x\in \Delta_n\Big\}.
\end{align}
Here, $I$ is the $n\times n$ identity matrix, and $A_G$ is the adjacency matrix of $G$ (i.e., $(A_G)_{i,j}=1$ if $\{i,j\}\in E$ and $(A_G)_{i,j}=0$ if $\{i,j\}\notin E$).

It has been observed that certain variations of problem (\ref{motzkin-straus}) also find $\frac{1}{\alpha(G)}$ (Section \ref{sec-preliminaries}, see also \cite{lv1, gibb}). In particular, we consider the following reformulation (\ref{ms-e}) of $\frac{1}{\alpha(G)}$.  Let $e\in E$ be an edge of $G$, and let $G\setminus e=(V, E\setminus \{e\})$ be the graph obtained by deleting the edge $e$ from the graph $G$. Then, we have
\begin{align}\label{ms-e}\tag{M-S-$e$}
 \frac{1}{\alpha(G)}=\min\Big\{x^T(A_G+I + A_{G\setminus e})x: x\in \Delta_n\Big\}.
\end{align}
In this paper, we study the finite convergence of the Lasserre hierarchy for problem (\ref{motzkin-straus}) and some variations of it, in particular the ones of the form (\ref{ms-e}). The notion of critical edges plays a crucial role in this analysis, as previously observed in \cite{lv1}. An edge $e\in E$ in a graph $G$ is {\em critical} if $\alpha(G\setminus e)=\alpha(G)+1$. For example, in the {\em odd cyles} $C_{2n+1}$ (for $n\geq 1$) all edges are critical, while no edge is critical in an {\em even cycle} $C_{2n}$ (for $n\geq 2$). A pair of vertices $(i,j)$ is called a {\em twin pair} if $\{i,j\}\in E$ and their set of neighbours are equal (excluding $i$ and $j$), i.e., $N_G(i)\setminus \{j\} = N_G(j)\setminus \{i\}$. If $(i,j)$ is a twin pair in $G$, then contracting the edge $\{i,j\}$ in $G$ produces the same graph as deleting the node $i$, denoted by $G\setminus i$ (and isomorphic to $G\setminus j$).

 Under the assumption that $G$ has no twin pairs we will characterize when the Lasserre hierarchy of problem (\ref{ms-e}) has finite convergence. 
\begin{pro}\label{main-e}
Assume $G$ has no twin pairs and let $e$ be an edge of $G$. The Lasserre hierarchy of problem (\ref{ms-e}) has finite convergence if and only if the edge $e$ is not critical in $G$.
\end{pro}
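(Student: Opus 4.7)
The plan is to combine a sufficient condition of Nie guaranteeing finite convergence when KKT, strict complementarity and second-order sufficiency hold at every global minimizer, together with an explicit obstruction specific to the Motzkin--Straus structure on the other side. Throughout, set $M := A_G + I + A_{G \setminus e}$, so that $f_e(x) = x^T M x$, and write $x^{(S)} := \tfrac{1}{\alpha(G)} \mathbf{1}_S$ for a maximum stable set $S$.

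First I would identify the global minimizers of (\ref{ms-e}). Using techniques of \cite{lv1} together with the no-twin-pairs assumption, the minimizers of $f_e$ over $\Delta_n$ are precisely the points $x^{(S)}$ with $S$ a maximum stable set of $G$. Indeed, any minimizer must be supported on a stable set of $G$ (otherwise some positive off-diagonal entry of $M$ strictly inflates the objective), and the no-twin-pairs condition rules out spurious minimizers whose support strictly contains a maximum stable set.

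Next I would check KKT and strict complementarity at each $x^{(S)}$. A direct computation gives $(Mx^{(S)})_k = 1/\alpha(G)$ for $k \in S$ and $(Mx^{(S)})_k = \tfrac{1}{\alpha(G)}\bigl(d_{G,S}(k) + d_{G\setminus e, S}(k)\bigr)$ for $k \notin S$, where $d_{H,S}(k):=|N_H(k)\cap S|$. KKT holds with multiplier $2/\alpha(G)$ since $d_{G,S}(k)\ge 1$ whenever $k\notin S$. Strict complementarity fails at $x^{(S)}$ precisely when some $k\notin S$ satisfies $d_{G,S}(k)+d_{G\setminus e,S}(k)=1$, i.e.\ $k$ is an endpoint of $e=\{i,j\}$, the other endpoint lies in $S$, and $k$ has no other neighbour in $S$. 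This is equivalent to $S\cup\{k\}$ being stable in $G\setminus e$, hence to $e$ being critical. The second-order sufficient condition is immediate: on the critical cone $\{d:\sum_k d_k=0,\ d_k=0 \text{ for } k\notin S\}$, the Hessian restricts to $2M_{S,S}=2I$, so $d^T(2M)d=2\|d\|^2>0$. Together with the (automatic) archimedean condition on $\Delta_n$, Nie's theorem on finite convergence via optimality conditions yields the forward implication.

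The reverse implication is the main obstacle. Assume $e=\{i,j\}$ is critical and, using the characterization above, pick a maximum stable set $S$ with $j\in S$, $i\notin S$, and $N_G(i)\cap S=\{j\}$. Let $v\in\mathbb{R}^n$ be the vector with $v_i=1$, $v_j=-1$, $v_k=0$ otherwise. The segment $x(t):=x^{(S)}+tv$ for $t\in[0,1/\alpha(G)]$ lies in $\Delta_n$ and joins $x^{(S)}$ to $x^{(S')}$ with $S'=(S\setminus\{j\})\cup\{i\}$, another maximum stable set. Using $M_{ii}-2M_{ij}+M_{jj}=1-2+1=0$ and $(Mx^{(S)})_i=(Mx^{(S)})_j=1/\alpha(G)$ one checks that $f_e\equiv 1/\alpha(G)$ along the entire segment. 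Suppose for contradiction that $f_e-1/\alpha(G) = \sigma_0 + \sum_{k=1}^n \sigma_k x_k + q\bigl(\sum_k x_k -1\bigr)$ with $\sigma_k\in\Sigma$ and some polynomial $q$. Restricting the identity to $x(t)$ forces $\sigma_0(x(t))$ and each $\sigma_k(x(t))$ with $k\in S\cup\{i\}$ to vanish as polynomials in $t$. Matching second-order derivatives in the direction $v$ and using $\mathbf{1}^T v=0$ places $v$ in the kernel of $\nabla^2\sigma_0(x^{(S)})$ and of each $\nabla^2\sigma_k(x^{(S)})$ for $k\in S$. Propagating these vanishings through higher-order derivative matching along the segment, and combining with the combinatorial data inherited from criticality of $e$, should yield an inconsistency. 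I expect this final step, turning a local degeneracy at the minimizer into a genuine obstruction at every level $r$, to be the main technical hurdle and the place where the structural results of \cite{lv1} really enter.
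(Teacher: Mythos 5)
The forward direction of your proposal (non-critical $e$ implies finite convergence via Nie's theorem) is essentially the paper's route: the paper invokes Theorem \ref{finite-acritical}, which verifies (CQC), (SCC) and (SOSC) at every minimizer exactly as you do. The reverse direction, however, is the heart of the proposition, and your sketch of it is missing the one idea that makes it work. You restrict the putative certificate $x^TMx-\tfrac{1}{\alpha(G)}=\sigma_0+\sum_k x_k\sigma_k+q\bigl(\sum_k x_k-1\bigr)$ to the segment $x(t)$ of minimizers, deduce that $\sigma_0$ and the $\sigma_k$ with $k$ in the support vanish along it, and then hope that ``propagating these vanishings through higher-order derivative matching'' yields an inconsistency. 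It cannot, on its own: if the endpoints of $e$ were twins, the very same segment of minimizers and the very same vanishing conditions would be present, and yet finite convergence can hold (this is precisely the content of Lemma \ref{lemma-twin} and Theorem \ref{main-2}). Any genuine contradiction must therefore use the no-twin-pairs hypothesis, which your reverse direction never touches.

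What the paper actually does: substitute $x\mapsto x+u_t$ into the certificate, kill the constant and linear terms of $\sigma_0(x+u_t)$ and of $(x+u_t)_k\sigma_k(x+u_t)$ for $k$ in the support, read off $q(u_t)=\tfrac{2}{\alpha(G)}$ from the coefficient of $x_s$ with $s\in S$, and then --- this is the key step --- compare the coefficient of $x_j$ for a vertex $j$ adjacent to exactly one endpoint of $e$ (such a $j$ exists precisely because the endpoints are not twins). This forces $\sigma_j(u_t)$ to equal an explicit polynomial of degree exactly $1$ in $t$, contradicting the fact that the restriction of a sum of squares to a line is a univariate sum of squares and hence has even degree. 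Your plan of placing $v$ in the kernels of the Hessians of the $\sigma_k$ at $x^{(S)}$ is looking at the wrong coefficients: the obstruction lives in the linear coefficient of a coordinate \emph{off} the support, not in the second-order data along the segment. A smaller point: your opening claim that the minimizers of (\ref{ms-e}) are exactly the scaled indicators of maximum stable sets is false when $e$ is critical (your own segment $x(t)$ is a family of counterexamples); it is correct only in the non-critical case, which is where you actually use it.
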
 
The result of Proposition \ref{main-e} combined with the fact that deciding whether an edge is critical in a graph without twin pairs is NP-hard (see Theorem \ref{theo-find-critical}) gives the main result of our paper (Theorem \ref{ref}). We also characterize when the Lasserre hierarchy of (\ref{motzkin-straus}) has finite convergence.  This forms our second main result.
\begin{thm}\label{main-2}
Let $G$ be a graph. The Lasserre hierarchy of problem (\ref{motzkin-straus}) has finite convergence if and only if the graph obtained after contracting all twin pairs of $G$ does not have critical edges .
\end{thm}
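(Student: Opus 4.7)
The plan is to derive Theorem~\ref{main-2} from Proposition~\ref{main-e} in two stages: a reduction to twin-free graphs, followed by a comparison between the finite-convergence properties of (\ref{motzkin-straus}) and (\ref{ms-e}) in the twin-free case.

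First I would handle the twin-pair reduction. Let $(i,j)$ be a twin pair of $G$ and set $G'=G\setminus i$. Since no stable set contains both vertices of a twin pair, $\alpha(G)=\alpha(G')$. I claim the Lasserre hierarchy of (\ref{motzkin-straus}) for $G$ has finite convergence if and only if it does for $G'$. Restricting any certificate for $G$ to the face $\{x_i=0\}$ of the simplex gives the forward direction, since the (\ref{motzkin-straus}) objective restricts exactly to that of $G'$. For the converse I substitute $y_{n-1}\mapsto x_i+x_j$ (where $y_{n-1}$ denotes the variable at the contracted vertex of $G'$) in a certificate for $G'$: the identity $x^T(A_G+I)x=y^T(A_{G'}+I)y\big|_{y_{n-1}=x_i+x_j}$, which follows from $N_G(i)\setminus\{j\}=N_G(j)\setminus\{i\}$, ensures the pulled-back polynomial is the correct (M-S) slack for $G$, and any factor $y_{n-1}\sigma(y)$ in the quadratic module becomes $x_i\sigma(x)+x_j\sigma(x)$, still in $\MM(\mathbf{g})$. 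Iterating, I may assume $G$ is twin-free.

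For the ``only if'' direction in the twin-free case, assume (\ref{motzkin-straus}) has finite convergence and let $e\in E$. The identity
\[
x^T(A_G+I+A_{G\setminus e})x-\tfrac{1}{\alpha(G)}=\bigl[x^T(A_G+I)x-\tfrac{1}{\alpha(G)}\bigr]+2\!\!\sum_{\{u,v\}\in E\setminus\{e\}}\!\!x_ux_v
\]
expresses the (\ref{ms-e}) slack as the (\ref{motzkin-straus}) slack plus an element of $\MM(\mathbf{g})$, so (\ref{ms-e}) also has finite convergence. Proposition~\ref{main-e} then forces $e$ to be non-critical.

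The ``if'' direction is the main obstacle. Assuming $G$ is twin-free with no critical edges, Proposition~\ref{main-e} supplies a finite-degree certificate for every (\ref{ms-e}), but the identity above cannot be reversed inside the sum-of-squares cone, so one cannot simply subtract off the $A_{G\setminus e}$ contribution. My plan is to revisit the construction behind Proposition~\ref{main-e}: when no edge is critical the certificates for different $e$ should share a common core from which the $A_{G\setminus e}$ term can be isolated, leaving a valid certificate for (\ref{motzkin-straus}). A complementary route, if certificate manipulation is intractable, is to exploit the expected clean geometry of the minimizer set of (\ref{motzkin-straus}) under twin-freeness and absence of critical edges---conjecturally just the convex hull of uniform distributions on maximum stable sets, with no spurious minimizers---and to build an SOS certificate directly via a KKT-type analysis around this set.
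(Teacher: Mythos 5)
Your twin-pair reduction is correct and coincides with the paper's Lemma~\ref{lemma-twin}. The remaining two steps both have genuine gaps. In the ``only if'' direction you claim that $2\sum_{\{u,v\}\in E\setminus\{e\}}x_ux_v$ lies in $\MM(x_1,\dots,x_n)$, so that a certificate for (\ref{motzkin-straus}) upgrades to one for (\ref{ms-e}). This is false: a monomial $x_ux_v$ is not in $\MM(x_1,\dots,x_n)$, and for $n\geq 3$ it is not even in $\MM(x_1,\dots,x_n)+I_{\Delta_n}$. Indeed, restrict a putative identity $x_ux_v=\sigma_0+\sum_i x_i\sigma_i+q(\sum_ix_i-1)$ to the plane $x_u=s$, $x_v=t$, $x_w=1-s-t$, $x_j=0$ otherwise (for some $w\notin\{u,v\}$): the ideal term and the terms $x_j\sigma_j$ with $j\notin\{u,v,w\}$ vanish, and comparing Taylor expansions at $(s,t)=(0,0)$ up to order two forces $\sigma_0,\sigma_w$ and then $\sigma_u,\sigma_v$ to vanish to order two there, so the indefinite form $st$ would equal a sum of two PSD quadratic forms --- a contradiction. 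The same obstruction hits $x^TA_{G\setminus e}x$ at any vertex nonadjacent in $G\setminus e$ to both endpoints of some edge, so the implication ``finite convergence of (\ref{motzkin-straus}) implies finite convergence of (\ref{ms-e})'' cannot be obtained by adding the difference of the objectives to the certificate. In the ``if'' direction you offer only speculative strategies, and your conjectured geometry (the convex hull of uniform distributions on maximum stable sets) is wrong in the relevant regime: by Theorem~\ref{theorem-minimizers}, when there are no critical edges every minimizer has stable support and is one of the finitely many points $\frac{1}{\alpha(G)}\chi^S$.

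The paper avoids both problems by never comparing (\ref{motzkin-straus}) with (\ref{ms-e}): it proves a single statement, Theorem~\ref{theo-finite-convergence-w}, valid for every $B\in\mathcal{B}(G)$, and specializes it to $B=A_G+I$ (Proposition~\ref{main-e} is the specialization to $B=A_G+I+A_{G\setminus e}$, used only for the complexity result). There the ``if'' part is exactly your ``complementary route'' made precise, namely Theorem~\ref{finite-acritical}: with no critical edges the minimizers are finitely many and satisfy (CQC), (SCC), (SOSC), so Nie's Theorem~\ref{theo-nie} applies. The ``only if'' part is a direct coefficient comparison on the certificate for $x^TBx-\frac{1}{\alpha(G)}$: a critical edge $\{l,m\}$ with $B_{lm}=1$ produces a segment of minimizers $u_t$, and evaluating the certificate at $x+u_t$ and using twin-freeness forces a sum of squares $\sigma_j(u_t)$ to equal a polynomial of degree one in $t$. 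To repair your outline, replace the passage through (\ref{ms-e}) by this direct argument applied to $B=A_G+I$.
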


We remark that the 'if part' in Proposition \ref{main-e} and  Theorem \ref{main-2} essentially follows from the previous work \cite{lv1}. This relies on using a result by Nie \cite{nie12} (see Theorem \ref{theo-nie}) claiming that finite convergence holds if the classical optimality conditions hold at every global minimizer. We describe these results in Section \ref{sec-preliminaries} for the sake of completeness. 

For the 'only if' part in Proposition \ref{main-e} and Theorem \ref{main-2}, we will exploit the structure of the (infnitely many) minimizers of problems (\ref{motzkin-straus}) and (\ref{ms-e}) to reach a contradiction. In fact, we show the result for a more general class of problems (see formulation (\ref{motzkin-straus-p}) in Section \ref{sec-preliminaries} and see Theorem \ref{theo-finite-convergence-w} for the more general statement).

\subsection{Link to related literature}
The complexity status of several decision problems in polynomial optimization has been studied recently. In \cite{az1}, Ahmadi and Zhang show that given two sets of polynomials ${\bf{g}}=\{g_1, \dots, g_m\}$ and \linebreak ${\bf{h}}=\{h_1, \dots, h_l\}$ it is NP-hard to test whether the archimedean property (\ref{arch}) holds. In the same work, they study the problem of deciding whether the optimum of problem (\ref{POP}) is attained. In particular, they show that this problem is NP-hard already when the objective is linear, and the constraints are quadratic. In addition, Ahmadi and Zhang \cite{az2} show that there is no polynomial-time algorithm which decides whether a quadratic program with a bounded feasible set has a unique local minimizer. In \cite{lv1}, we show that it is NP-hard to decide whether a polynomial optimization problem has finitely many minimizers, in fact we show this already for standard quadratic programs. The proof of the last two results also considers variations of the formulation (\ref{motzkin-straus}) by Motzkin and Straus, as we do in the present paper.
\\

The finite convergence of the Lasserre hierarchy of problem (\ref{motzkin-straus}) has been studied recently. Indeed, in \cite{lv1} we show that finite convergence holds for graphs without critical edges (also recalled later in Theorem \ref{finite-acritical}). In fact, we show that this result extends for several formulations for the stability number of {\em weighted} graphs. The following reformulation of problem (\ref{motzkin-straus}) has been also considered: 
$$\frac{1}{\alpha(G)}=\min \{(x^{\circ2})^T(A_G+I)x^{\circ2} : \sum_{i=1}^nx_i^2=1\},$$
where $x^{\circ2}=(x_1^2, x_2^2, \dots, x_n^2)$. Recently we have shown in \cite{sv} that the Lasserre hierarchy for this problem has {\em always} finite convergence, for any graph $G$.

 The Lasserre hierarchy of a (POP) has finite convergence if and only if certain polynomial (namely, $f-f^*$) which is nonnegative ({\em with zeros}) on the semialgebraic set $K$ belongs to the quadratic module $\MM({\bf{g}}) +I({\bf{h}})$. The representation of nonnegative polynomials in quadratic modules has been intesively studied in the literature. We highlight the works by Putinar \cite{put}, Scheiderer \cite{sch1,sch2}, Marshall \cite{mar03,mar06}, Kriel and Schweighofer\cite{ksa,ksb}, and references therein.
 
\subsection{Organization of the paper}
In Section \ref{sec-preliminaries} we recall the formulation (\ref{motzkin-straus}) by Motzkin and Straus and some variations of it. Then, we recap some known results about their minimizers and their Lasserre hierarchies.  In addition, we recall a result by Nie that permits to show finite convergence of the Lasserre hierarchy under the classical optimality conditions. In Section \ref{sec-characterization}, we prove Theorem \ref{main-2}, that is, we characterize the graphs for which the Lasserre hierarchy of problem (\ref{motzkin-straus}) has finite convergence. In fact, we show a more general result (Theorem \ref{theo-finite-convergence-w}) that will be used later for showing our main complexity result. Finally, in Section \ref{sec-complexity} we show our main result. Namely, we show that the problem of deciding whether the Lasserre hierarchy of a given polynomial optimization has finite convergen is NP-hard.
\\
\\
The main results of this article were included in the PhD thesis \cite{var}.

\section{Preliminaries}\label{sec-preliminaries}
\subsection{Stability number of a graph}
We start by recalling the result by Motzkin and Straus that gives the following formulation (\ref{motzkin-straus}) for $\frac{1}{\alpha(G)}$, already mentioned in the Introduction. 
\begin{thm}\label{thm-ms}
Let $G=([n], E)$ be a graph, then 
\begin{align}\label{motzkin-straus}\tag{M-S}
    \frac{1}{\alpha(G)}=\min\Big\{x^T(A_G+I)x: x\in \Delta_n\Big\}.
\end{align}
\end{thm}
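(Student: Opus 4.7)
The plan is the classical two-sided bound. For the easy inequality $\min\{x^T(A_G+I)x:x\in\Delta_n\}\le 1/\alpha(G)$, I would let $S$ be a stable set of maximum size $\alpha=\alpha(G)$, and evaluate the objective at the uniform distribution $x\in\Delta_n$ supported on $S$, i.e.\ $x_i=1/\alpha$ for $i\in S$ and $x_i=0$ otherwise. Since $S$ is stable, $x^TA_Gx=0$, and $x^TIx=\sum_{i\in S}(1/\alpha)^2=1/\alpha$, giving the desired upper bound.

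The main content is the reverse inequality. I would fix a minimizer $x^*\in\Delta_n$ and analyze its support $S:=\{i:x_i^*>0\}$. The key observation is that whenever $\{i,j\}\in E$, the restriction of $f(x):=x^T(A_G+I)x$ to the line $x^*+t(e_i-e_j)$ is affine in $t$. Indeed, the coefficient of $t^2$ equals
\[
(e_i-e_j)^T(A_G+I)(e_i-e_j)=(A_G+I)_{ii}-2(A_G+I)_{ij}+(A_G+I)_{jj}=1-2+1=0,
\]
using $\{i,j\}\in E$ and the zero diagonal of $A_G$. Hence if both $x_i^*,x_j^*>0$, I can push all mass from one endpoint onto the other (within the simplex, since the line stays feasible for $-x_i^*\le t\le x_j^*$) without increasing the objective, thereby producing another minimizer with strictly smaller support.

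Iterating this reduction, I can assume the support $S$ of some minimizer is a stable set in $G$, so in particular $|S|\le\alpha(G)$ and $x^{*T}A_Gx^*=0$. Then the objective reduces to $\sum_{i\in S}(x_i^*)^2$, and by the power-mean (Cauchy--Schwarz) inequality applied to $\sum_{i\in S}x_i^*=1$,
\[
\sum_{i\in S}(x_i^*)^2\ge \frac{1}{|S|}\ge \frac{1}{\alpha(G)},
\]
which completes the lower bound and hence the theorem.

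The only subtle step I expect is justifying the mass-shifting reduction cleanly: one has to argue that after finitely many such steps the support becomes stable (which follows because each step strictly decreases $|S|$, and the objective is non-increasing so the limit point is still a minimizer). Everything else is a direct computation.
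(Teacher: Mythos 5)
Your argument is correct and is essentially the classical Motzkin--Straus proof; note that the paper itself states this theorem without proof, simply citing \cite{ms} (it only records the easy direction, that $\frac{1}{\alpha(G)}\chi^S$ attains the value $\frac{1}{\alpha(G)}$). One cosmetic point: your closing remark about ``the limit point'' is unnecessary --- since each mass-shifting step strictly decreases the support size and preserves optimality (the objective is affine, hence constant, along the feasible segment through an interior minimizer), the process terminates after finitely many steps with a minimizer supported on a stable set, and no limiting argument is involved.
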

Observe that given a graph $G$ and a stable set $S$ of $G$ of size $\alpha(G)$ the vector $x=\frac{1}{\alpha(G)}\chi^{S}$ is a minimizer of problem (\ref{motzkin-straus}). Here, $\chi^S\in \mathbb{R}^n$ is the characteristic vector of $S$ (defined by $(\chi^S)_i = 1$ for $i \in S$ and $(\chi^S)_i = 0$ for $i \in [n] \setminus S$). In general, problem (\ref{motzkin-straus}) might have infinitely many minimzers as shown in \cite{gibb} (see also \cite{lv1} and Theorem \ref{theorem-minimizers}).

In this paper, we consider some variations of problem (\ref{motzkin-straus}). For this, given a graph $G$ we consider the following set of matrices
\begin{align}\label{eqsetB}
\mathcal{B}(G)=\{B\in \MS^n: B_{ii}=1, B_{ij}\geq 1 \text{ for } \{i,j\}\in E, B_{ij}=0 \text{ for } \{i,j\}\notin E\Big\}. 
\end{align}
So, $A_G+I\in \mathcal{B}$ and $B\geq A_G +I$ for any $B\in \mathcal{B}$. Then,  for any $B\in \mathcal{B}$ the following equality holds as a consequence of Theorem \ref{thm-ms}:
\begin{align}\label{motzkin-straus-p}\tag{M-S-B}
\frac{1}{\alpha(G)}=\min\Big\{x^TBx: x\in \Delta_n\Big\}.
\end{align}
Indeed, the vectors of the form $x=\frac{1}{\alpha(G)}\chi^S$ (with $S$ stable in $G$ of size $\alpha(G)$) give $x^TBx=\frac{1}{\alpha(G)}$, and for any $x\in \Delta_n$ we have $x^TBx \geq x^T(A_G+I)x$.
 We now consider the Lasserre hierarchy corresponding to problem (\ref{motzkin-straus-p}).
 
 For convenience, we denote by $\MM(x_1, \dots, x_n)$ (resp. $\MM(x_1, \dots, x_n)_r$) the quadratic module (resp. the $r$-th truncated quadratic module) generated by the set of polynomials $\{x_1, x_2, \dots, x_n\}$. Also, we denote by $I_{\Delta_n}$ the ideal generated by the polynomial $\sum_{i=1}^nx_i-1$. The Lasserre relaxation of order $r$ for problem (\ref{motzkin-straus-p}) reads

\begin{align}\label{las-motzkin-B}
f_{G,B}^{(r)}=\max \{ \lambda: x^TBx-\lambda \in \MM(x_1, x_2, \dots, x_n)_r + I_{\Delta_n} \}.
\end{align}
When $B=A_G+I$, we simply write $f_G^{(r)}$. That is, we define $f_G^{(r)}:=f_{G,A_G+I}^{(r)}$, i.e., 
\begin{align}\label{las-motzkin}
f_{G}^{(r)}=\max \{ \lambda: x^T(A_G+I)x-\lambda \in \MM(x_1, x_2, \dots, x_n)_r + I_{\Delta_n} \}.
\end{align}

It is a well-know fact that the archimedean condition (\ref{arch}) is satisfied by the sets ${\bf{g}}=\{x_1, \dots, x_n\}$ and ${\bf{h}}=\{\sum_{i=1}^nx_i-1\}$. We repeat here the easy argument for the sake of completeness. We have the following identities
$$1-x_i= 1-\sum_{k=1}^n x_k +\sum_{k\in[n]\setminus \{i\}}x_k, \quad 1-x_i^2 = {(1+x_i)^2\over 2}(1-x_i) + {(1-x_i)^2\over 2}(1+x_i),$$
that imply $n-\sum_{i=1}^n x_i^2 \in \mathcal{M}(x_1, \dots, x_n) +I_{\Delta_n}$.

It is also well-known that the program (\ref{las-motzkin-B}) (and thus (\ref{las-motzkin})) is feasible and attains its maximum, thus the `sup' from the definition of the Lasserre hierarchy can be replaced by a 'max'. We repeat the argument for completeness as well. If $B\succeq 0$ (i.e., all its eigenvalues are nonnegative), then the polynomial $x^TBx$ is a sum of squares and thus $x^TBx\in \MM(x_1, \dots, x_n)$. Otherwise, the matrix $B-\mu I\succeq 0$, where $\mu:=\lambda_{\text{min}}(B)<0$. Then, the polynomial
$$x^TBx-n\mu=x^T(B-\mu I)x -\mu (n-\sum_{i=1}^nx_i^2)$$
belongs to $\MM(x_1, x_2, \dots, x_n) + I_{\Delta_n}$, showing feasibility of program (\ref{las-motzkin-B}). Finally, the optimum of program (\ref{las-motzkin-B}) is attained because the feasible region is a closed set (see \cite{mar03}).
\\

It has been shown in \cite{lv1} that the notion of critical edge plays a role in the analysis of the minimizers of problem (\ref{motzkin-straus-p}) and of the convergence of the hierarchies $f_{G,B}^{(r)}$ and $f_G^{(r)}$. Recall that an edge $e\in E$ is critical if $G$ if $\alpha(G\setminus e)=\alpha(G)+1$. For a verctor $x\in \mathbb{R}^n$, we denote by $\text{supp}(x)=\{i\in [n]: x_i>0\}$ its support. We have the following result.
\begin{thm}[particular case of Corollary 4.4. in \cite{lv1}]\label{theorem-minimizers}
Let $G=(V,E)$ be a graph, and let $B\in \mathcal{B}(G)$. Let $x\in \Delta_n$ with support $S=\text{supp}(x)$, and let $C_1, C_2, \dots, C_k$ denote the connected components of the graph $G[S]$. Then, $x$ is a minimizer of problem (\ref{motzkin-straus-p}) if and only if the following holds:
\begin{description}
\item[(i)]  $B_{ij}=1$ for all edges $\{i,j\}$ of $G[S]$,
\item[(ii)] $C_h$ is a clique of $G$ for all $h\in [k]$,
\item [(iii)] $\sum_{i\in C_h} x_i = \frac{1}{\alpha(G)}$ for all $h\in [k]$. 
\end{description}
In this case all edges of $G[S]$ are critical in $G$.
\end{thm}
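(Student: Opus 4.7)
The plan is to decouple the role of the matrix $B$ from the combinatorial structure of $G$, reducing the characterization to the original Motzkin-Straus problem (\ref{motzkin-straus}). Since $B \in \mathcal{B}(G)$ satisfies $B_{ij} \geq (A_G+I)_{ij}$ for every pair $(i,j)$ and $x \geq 0$ on $\Delta_n$,
\[
x^T B x \geq x^T(A_G+I) x \geq \frac{1}{\alpha(G)},
\]
where the last step is Theorem \ref{thm-ms}. Equality in the first inequality forces $B_{ij} = 1$ on every edge $\{i,j\}$ of $G[S]$, which is exactly condition (i). Hence $x$ is a minimizer of (\ref{motzkin-straus-p}) if and only if (i) holds and $x$ is a minimizer of (\ref{motzkin-straus}), and the task reduces to characterizing the minimizers of the latter as those $x$ satisfying (ii) and (iii).

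For the sufficiency direction (assuming (i)--(iii)): using (ii), the quadratic form $x^T(A_G+I) x$ restricted to the indices of each clique $C_h$ collapses to $(\sum_{i \in C_h} x_i)^2$, which by (iii) equals $1/\alpha(G)^2$. Summing over $h$ and combining $\sum_i x_i = 1$ with (iii) forces $k = \alpha(G)$, and the total value is $k/\alpha(G)^2 = 1/\alpha(G)$, so $x$ attains the minimum.

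The main technical step is the necessity of (ii) and (iii). My plan is to use the first-order (KKT) optimality conditions on the simplex; multiplying stationarity by $x_i$ and summing yields $((A_G+I)x)_i = 1/\alpha(G)$ for every $i \in S$, i.e., the identities $x_i + \sum_{j \in N_G(i) \cap S} x_j = 1/\alpha(G)$. The hard part is showing that these equations, together with connectedness of each $C_h$, force $C_h$ to be a clique; the natural approach is a perturbation argument, showing that a missing edge inside some $C_h$ would allow a mass transfer along a path joining the two non-adjacent endpoints that strictly decreases the objective, contradicting optimality. Condition (iii) then follows by summing the above identities over the vertices of any single clique $C_h$.

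Finally, for the criticality statement: given an edge $e = \{i,j\}$ of $G[S]$, necessarily lying inside some clique $C_h$ with $|C_h| \geq 2$, one constructs a stable set of size $\alpha(G)+1$ in $G\setminus e$ by picking one representative from each $C_{h'}$ with $h' \neq h$ and adjoining both $i$ and $j$. Vertices in distinct components of $G[S]$ are non-adjacent in $G$ (hence in $G\setminus e$), and $i,j$ are non-adjacent in $G\setminus e$ by construction, yielding a stable set of size $(k-1)+2 = \alpha(G)+1$ and thus witnessing that $e$ is critical.
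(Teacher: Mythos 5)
The paper itself does not prove this statement; it imports it verbatim as a particular case of Corollary 4.4 of \cite{lv1}, so there is no internal proof to compare against, and your write-up is effectively supplying the missing argument. Your overall route is correct and is the natural one. The reduction to the unweighted case is clean: since $B\geq A_G+I$ entrywise and $x\geq 0$ on $\Delta_n$, minimality of $x$ for problem (M-S-B) is equivalent to condition (i) together with minimality for (M-S). The sufficiency computation (each clique term collapsing to $(\sum_{i\in C_h}x_i)^2$, and $k=\alpha(G)$ forced by (iii) and $\sum_i x_i=1$), the first-order identities $((A_G+I)x)_i=1/\alpha(G)$ for $i\in S$, the derivation of (iii) once (ii) is known, and the stable set of size $(k-1)+2=\alpha(G)+1$ in $G\setminus e$ witnessing criticality are all correct.

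The one step you leave as a plan is the clique step, and the phrase ``mass transfer along a path joining the two non-adjacent endpoints'' hides a trap: the two-point transfer $d=e_i-e_j$ between non-adjacent $i,j\in S$ has vanishing linear term (by the first-order identities) but second-order term $d^T(A_G+I)d=1+1-2\cdot 0=2>0$, so it \emph{increases} the objective and gives no contradiction. The direction that works is a three-point one. If a component $C_h$ is connected but not a clique, it contains vertices $i,j$ at distance exactly $2$ in $G[S]$, say with common neighbour $v\in C_h$ and $\{i,j\}\notin E$. Take $d=e_i+e_j-2e_v$: then $\sum_a d_a=0$, $d$ is supported on $S$ so $x+\epsilon d\in\Delta_n$ for small $\epsilon>0$, the linear term $2\epsilon\, d^T(A_G+I)x$ vanishes because $((A_G+I)x)_a=1/\alpha(G)$ for all $a\in S$, and $d^T(A_G+I)d=1+1+4+2\cdot 0-4-4=-2<0$, so the objective strictly decreases --- the desired contradiction with (local, hence global) minimality. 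With that perturbation made explicit, your proof is complete.
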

This result is used to study the finite convergence of the hierarchies $f_{G,B}^{(r)}$, as described in the following sections.
\subsection{Optimality conditions and finite convergence}
In this section, we recall a result of Nie for proving the finite convergence of Lasserre hierarchies when the classical optimality conditions hold. We first do a quick recap of the classical optimality conditions applied to problem (\ref{POP}) (although it holds for a more general class of problems as described in \cite{ber}): 

Let $u$ be a local minimizer of problem (\ref{POP}), and let 
$$J(u)=\{j\in [m]: g_j(u)=0\}$$
be the index set of the active inequality constraints at $u$. We say that the {\em constraint qualification condition (CQC)} holds at $u$ if the vectors 
 $$\{\nabla g_j(u): j\in J(u)\}\cup \{\nabla h_i(u): i\in[l]\}$$
 are linearly independent. If (CQC) holds at $u$, then there exist multipliers $\lambda_1,\dots$, $\lambda_k$, $\mu_1, \dots, \mu_m\in \mathbb{R}$ such that
 \begin{align*}
 \nabla f(u) = \sum_{i=1}^l \lambda_i\nabla h_i(u) + \sum_{j=1}^m \mu_j\nabla g_j(u),\\
 \mu_1g_1(u)=0, \dots, \mu_mg_m(u)=0, \mu_1\geq 0, \mu_m\geq 0.
 \end{align*}
If it holds that 
$$\mu_j>0 \text{ for every } j\in J(u),\text{ and }\quad  \mu_j=0 \text{ for } j\in [m]\setminus J(u),$$
then we say that the {\em strict complementarity condition (SCC)} holds at $u$. We define now the Lagrangian function 
$$L(x)=f(x)-\sum_{i=1}^l \lambda_ih_i(x)-\sum_{j\in J(u)}\mu_jg_j(x).$$
A necessary condition for $u$ to be a local minimizer is the following: 
$$v^T\nabla^2L(u)v\geq 0 \text{ for all } v\in G(u)^\perp,$$
where $G(u)$ is defined as the matrix with rows the gradients of the active constraints at $u$, and $G(u)^\perp$ is its kernel. Hence, we have
\begin{align*}
G(u)^{\perp}=\{x\in \mathbb{R}^n: \ &  x^T\nabla  g_j(u)=0  \text{ for all  } j\in J(u) \text{ and }  \\  &x^T\nabla h_i(u)=0 \text{ for all } i\in [l]\}.
\end{align*}
If it holds that 
\begin{align*}
    v^T\nabla^2L(u)v> 0 \text{ for all } 0\neq v\in G(u)^{\perp},
\end{align*}
then we say that the \textit{second order sufficiency condition (SOSC)} holds at $u$.
\\

The relation between the optimality conditions just described and the finite convergence of the Lasserre hierarchy of (\ref{POP}) is given by the following result of Nie \cite{nie12}.
\begin{thm}[\cite{nie12}]\label{theo-nie}
Assume the archimedean condition is satisfied by the sets ${\bf{g}}=\{g_1, \dots, g_m\}$ and ${\bf{h}}=\{h_1, \dots, h_l\}$. If the conditions (CQC), (SCC) and (SOSC) hold at every global minimizer of (\ref{POP}), then the Lasserre hierarchy $f^{(r)}$ has finite convergence. 
\end{thm}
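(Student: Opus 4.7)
The plan is to reduce the claim to an algebraic representation theorem, specifically Marshall's boundary Hessian theorem \cite{mar03, mar06}. Finite convergence $f^{(r)} = f^*$ for some $r$ is equivalent to the membership $f - f^* \in \MM({\bf{g}}) + I({\bf{h}})$, since any concrete certificate uses only finitely many sums-of-squares of bounded degree. Under the archimedean hypothesis, Putinar's Positivstellensatz yields only the weaker $f - f^* + \varepsilon \in \MM({\bf{g}}) + I({\bf{h}})$ for every $\varepsilon > 0$, so a stronger tool is needed, and this is exactly what Marshall provides: if a polynomial $p$ is nonnegative on $K$ and at every zero of $p$ in $K$ the so-called \emph{boundary Hessian conditions} (BHC) hold, then $p \in \MM({\bf{g}}) + I({\bf{h}})$.

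The key analytic step is to verify that CQC, SCC, and SOSC at a global minimizer $u$ together imply BHC at $u$. I would work in local coordinates: CQC guarantees linear independence of the gradients of the active inequality constraints $\{g_j\}_{j \in J(u)}$ together with the equality constraint gradients $\{\nabla h_i\}_{i \in [l]}$, so by the implicit function theorem there exist analytic local coordinates $(y_1, \dots, y_n)$ centered at $u$ in which $K$ takes the model form $\{y_j \geq 0 : j \in J(u),\ y_i = 0 : i \in [l]\}$. The KKT stationarity then expresses the linear Taylor term of $f - f^*$ as $\sum_{j \in J(u)} \mu_j y_j$ modulo the ideal, and SCC forces each $\mu_j > 0$ — this is the linear half of BHC. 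Since, in these coordinates, the active constraints are coordinate functions and hence have vanishing Hessians, the quadratic Taylor term of $f - f^*$ restricted to $\{y_j = 0 : j \in J(u),\ y_i = 0 : i \in [l]\}$ coincides with the restriction of the Lagrangian Hessian $\nabla^2 L(u)$ to $G(u)^\perp$, which by SOSC is positive definite — this is the quadratic half of BHC. Since the global minimizers are exactly the zeros of $f - f^*$ in $K$, applying Marshall's theorem yields $f - f^* \in \MM({\bf{g}}) + I({\bf{h}})$, hence finite convergence.

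The main obstacle is the bridge between the differential data (CQC, SCC, SOSC and a smooth coordinate change) and the algebraic data (membership in a polynomial quadratic module). Marshall's theorem performs the algebraic gluing of the local sums-of-squares certificates at each minimizer into a single global representation with \emph{polynomial} coefficients, and its proof is the technically demanding part; in effect, most of the depth of Nie's result is packaged inside Marshall's theorem, together with the verification that a smooth implicit-function-theorem change of coordinates suffices to reduce the general case to the model situation in which BHC is formulated. An alternative, more self-contained route (closer to \cite{nie12}) works directly with Lagrange-multiplier polynomials and a truncation argument under the archimedean assumption, avoiding Marshall at the cost of establishing explicit degree bounds at each minimizer.
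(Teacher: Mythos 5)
This statement is quoted from \cite{nie12} and the paper gives no proof of it, so there is no in-paper argument to compare against; your sketch should instead be measured against Nie's original proof, and it does in fact follow the same route: reduce finite convergence to the membership $f-f^*\in\MM({\bf{g}})+I({\bf{h}})$, show that (CQC), (SCC), (SOSC) at a minimizer imply Marshall's boundary Hessian conditions there, and invoke Marshall's representation theorem. Two caveats are worth flagging. First, the ``smooth implicit-function-theorem change of coordinates'' is a slight misdescription of what BHC requires: the local parameters $t_1,\dots,t_k$ appearing in the linear part of the condition must be the active constraint polynomials $g_j$, $j\in J(u)$, themselves (completed to a full system of local parameters by linear forms, which is exactly what (CQC) permits); one cannot substitute an arbitrary analytic coordinate system, since the conclusion is membership in the quadratic module generated by the given $g_j$. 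Second, Marshall's theorem as stated in \cite{mar06} does not directly handle the equality constraints ${\bf{h}}$; Nie's paper supplies the extension to $\MM({\bf{g}})+I({\bf{h}})$ (essentially by working modulo the ideal), and your sketch silently absorbs this. Neither point breaks the argument --- your summary correctly identifies that the analytic-to-algebraic bridge and the gluing of local certificates are where all the difficulty lives --- but as written the proposal is a faithful outline of the cited proof rather than an independent argument.
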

\subsection{Finite convergence for graphs with no critical edges}
As an application of Theorem \ref{theorem-minimizers} and Theorem \ref{theo-nie} we obtain the following result.
 This a particular case of a more general result obtained in my previous work \cite{lv1} with Monique Laurent. Specifically, it follows by considering a particular case of Proposition 4.5. and Theorem 5.1. in \cite{lv1}.
 \begin{thm}[\cite{lv1}]\label{finite-acritical}
Let $G=([n], E)$ be a graph, and let $B\in \mathcal{B}$. The following three assertions are equivalent:
\begin{description}
\item [(i)] $B_{lm}>1$ for any critical edge $\{l,m\}$ of $G$. 
\item[(ii)] Problem (\ref{motzkin-straus-p}) has finitely many minimizers.
\item[(iii)] All minimizers of problem (\ref{motzkin-straus-p}) satisfy the optimality conditions (CQC), (SCC), (SOSC).
\end{description}
In addition, any of these three assertions imply the following:
\begin{description}
\item [(iv)] The hierarchy $f_{G,B}^{(r)}$ has finite convergence.
\end{description}
\end{thm}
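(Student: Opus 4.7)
Plan: I would establish $(i)\Leftrightarrow (ii)$, then $(i)\Leftrightarrow (iii)$, and finally derive $(iv)$ from $(iii)$ by invoking Nie's Theorem~\ref{theo-nie}. The central tool throughout is the minimizer characterization in Theorem~\ref{theorem-minimizers}, which simultaneously constrains the combinatorics of $G[\text{supp}(x)]$ and the entries $B_{ij}$ on its edges.

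For $(i)\Rightarrow (ii)$: any minimizer $x$ with support $S$ forces, by Theorem~\ref{theorem-minimizers}, every edge of $G[S]$ to be critical in $G$ and to carry $B_{ij}=1$. Hypothesis $(i)$ immediately rules out such an edge, so $S$ is stable. Each clique component of $G[S]$ is then a single vertex, and condition (iii) of Theorem~\ref{theorem-minimizers} gives $x_i=1/\alpha(G)$ on $S$, whence $|S|=\alpha(G)$ and $x=(1/\alpha(G))\chi^S$. Only finitely many such vectors exist. For $(ii)\Rightarrow (i)$ I argue by contraposition: if $\{l,m\}$ is critical with $B_{lm}=1$, then criticality yields a stable set of size $\alpha(G)+1$ in $G\setminus\{l,m\}$ that must contain both $l$ and $m$; removing them leaves a stable set $T'$ of $G$ of size $\alpha(G)-1$ whose vertices have no $G$-neighbour in $\{l,m\}$. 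For each $t\in[0,1]$, the vector placing weight $t/\alpha(G)$ on $l$, $(1-t)/\alpha(G)$ on $m$, and $1/\alpha(G)$ on every vertex of $T'$ satisfies the three conditions of Theorem~\ref{theorem-minimizers} (the unique edge of $G[\text{supp}(x(t))]$ is $\{l,m\}$, $B_{lm}=1$, and $x_l+x_m=1/\alpha(G)$), producing an infinite family of minimizers.

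For $(i)\Leftrightarrow (iii)$: under $(i)$, every minimizer has the form $x=(1/\alpha(G))\chi^S$ with $S$ maximum stable, so I can verify the optimality conditions directly. The active inequality gradients $\{e_j:j\notin S\}$ together with $\nabla h=\mathbf{1}$ are linearly independent, yielding (CQC). Solving $2Bx=\lambda\mathbf{1}+\sum_{j\notin S}\mu_j e_j$ gives $\lambda=2/\alpha(G)$ and
\[
\mu_j=\frac{2}{\alpha(G)}\Bigl(\sum_{k\in S}B_{jk}-1\Bigr)\quad\text{for } j\notin S.
\]
Maximality of $S$ forces $|N_G(j)\cap S|\geq 1$; if this intersection has size at least two, then $\mu_j>0$ trivially, and if its unique element is $k$, then $S\cup\{j\}$ becomes stable after deleting $\{j,k\}$, so $\{j,k\}$ is critical, hence $B_{jk}>1$ by $(i)$ and again $\mu_j>0$. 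This is (SCC). For (SOSC), $G(x)^\perp$ consists of vectors supported on $S$ with zero sum, and on such $v$ one has $v^TBv=\sum_{i\in S}v_i^2>0$ since $B_{ik}=0$ for distinct $i,k\in S$ (stability of $S$). Conversely, if $\{l,m\}$ is critical with $B_{lm}=1$, the construction above supplies a maximum stable set $S=\{l\}\cup T'$ with $N_G(m)\cap S=\{l\}$; at the minimizer $(1/\alpha(G))\chi^S$ the multiplier $\mu_m$ then vanishes, so (SCC) fails.

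Finally, $(iii)\Rightarrow (iv)$ follows from Theorem~\ref{theo-nie} together with the archimedean condition for $\Delta_n$ already verified in the preliminaries. The principal delicacy will be the equivalence $(i)\Leftrightarrow (iii)$: one must match, for each critical edge $\{l,m\}$, a maximum stable set $S$ at whose KKT point that edge governs a single multiplier $\mu_j$, so that the (failure or success of) (SCC) is localized to one coordinate and directly reads off the value of $B_{lm}$. All other implications reduce cleanly to the combinatorial content of Theorem~\ref{theorem-minimizers}.
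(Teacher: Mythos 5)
Your proposal is correct, but it is worth noting that the paper does not prove this statement at all: Theorem~\ref{finite-acritical} is imported verbatim from \cite{lv1}, with the remark that it follows from Proposition~4.5 and Theorem~5.1 there. You have instead supplied a self-contained derivation from the two tools the paper does make available, namely the minimizer characterization (Theorem~\ref{theorem-minimizers}) and Nie's theorem (Theorem~\ref{theo-nie}), and every step checks out. The implication $(i)\Rightarrow(ii)$ correctly reads off from Theorem~\ref{theorem-minimizers} that under $(i)$ every minimizer's support induces an edgeless subgraph, hence is a maximum stable set with uniform weights; the contrapositive of $(ii)\Rightarrow(i)$ uses exactly the one-parameter family $u_t=\frac{1}{\alpha(G)}\bigl(t\chi^{T'\cup\{l\}}+(1-t)\chi^{T'\cup\{m\}}\bigr)$ that the paper itself deploys later in the proof of Theorem~\ref{theo-finite-convergence-w}, so your construction is consistent with the paper's machinery. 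Your KKT computation for $(i)\Leftrightarrow(iii)$ is also sound: $\lambda=2/\alpha(G)$, $\mu_j=\frac{2}{\alpha(G)}(\sum_{k\in S}B_{jk}-1)$, the case split on $|N_G(j)\cap S|$ correctly ties positivity of $\mu_j$ to criticality of $\{j,k\}$ and hence to hypothesis $(i)$, and $\nabla^2 L=2B$ restricted to vectors supported on the stable set $S$ is positive definite because $B_{ik}=0$ off the diagonal there. The one benefit of your route is that the reader need not consult \cite{lv1}; the cost is length, which is presumably why the paper chose to cite rather than reprove. No gaps.
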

In the next section, we will study situations in which assertion (iv) is equivalent to assertions (i)-(ii)-(iii).

\section{Characterizing finite convergence}\label{sec-characterization}
 In this section, we aim to characterize when the hierarchy $f_{G,B}^{(r)}$ (and $f_G^{(r)}$) has finite convergence. As a main result, we characterize the graphs for which the hierarchy $f_{G}^{(r)}$ (of problem (\ref{motzkin-straus})) has finite convergence (Theorem \ref{main-2}). For this, we will show a more general result (Theorem \ref{theo-finite-convergence-w}) that we will use also in Section \ref{sec-complexity} for showing our main complexity result (Theorem \ref{ref}). 
\subsection*{In the absence of twin pairs}
The result of Theorem \ref{finite-acritical} show {\em sufficient} conditions for the hierarchy $f_{G,B}^{(r)}$ to have finite convergence.
We show that when the graph $G$ does not have twin pairs these {\em sufficient} conditions are in fact {\em necessary}. 
\begin{thm}\label{theo-finite-convergence-w}
Let $G=([n],E)$ be a graph without twin pairs and let $B\in \mathcal{B}(G)$. The Lasserre hierarchy $f_{G,B}^{(r)}$ has finite convergence to $\frac{1}{\alpha(G)}$ if and only if, for any critical edge $\{l,m\}$ of $G$, we have $B_{lm}>1$.
\end{thm}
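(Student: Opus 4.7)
The ``if'' direction is exactly the implication (i) $\Rightarrow$ (iv) in Theorem \ref{finite-acritical}, so I will focus on the ``only if'' direction, which I plan to prove by contrapositive: assuming some critical edge $\{l,m\}$ of $G$ satisfies $B_{lm}=1$, I will derive a contradiction with the existence of any Lasserre certificate $p(x) := x^T B x - 1/\alpha(G) = \sigma_0 + \sum_{i=1}^n \sigma_i x_i + q(\sum_i x_i - 1)$ with $\sigma_0, \sigma_i \in \Sigma$.

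First, since $\{l, m\}$ is critical, I can pick a stable set $S$ in $G \setminus \{l,m\}$ of size $\alpha(G)+1$; it must contain both $l$ and $m$. Setting $v := e_l - e_m$ and $\bar{x} := \frac{1}{\alpha(G)} \chi^{S \setminus \{l\}}$, I consider the affine line $x(t) := \bar{x} + tv$. A direct computation using $B_{ll} = B_{mm} = B_{lm} = 1$ and the fact that $G[S]$ has only the edge $\{l,m\}$ gives $v^T B v = 0$, $v^T B \bar{x} = 0$, and $\bar{x}^T B \bar{x} = 1/\alpha(G)$, so $p(x(t)) \equiv 0$ as a polynomial in $t$. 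For $t \in [0, 1/\alpha(G)]$, we also have $x(t) \in \Delta_n$, and Theorem \ref{theorem-minimizers} confirms each such $x(t)$ is a minimizer of (\ref{motzkin-straus-p}).

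Substituting $x(t)$ into the assumed certificate and using $\sum_i x_i(t) = 1$ yields the polynomial identity $\sigma_0(x(t)) + \sum_i \sigma_i(x(t)) x_i(t) = 0$ in $t$. On $[0, 1/\alpha(G)]$ every summand is nonnegative, so each is identically zero there, hence identically zero as a polynomial in $t$. Dividing by the nonzero polynomials $t$, $1/\alpha(G) - t$, and $1/\alpha(G)$ (for $i = l$, $i = m$, and $i \in S \setminus \{l,m\}$, respectively) forces $\sigma_0(x(t)) \equiv 0$ and $\sigma_i(x(t)) \equiv 0$ for every $i \in S$. Because a sum of squares that vanishes at a point has all of its square-root summands vanishing there, the gradients $\nabla \sigma_0$ and $\nabla \sigma_i$ for $i \in S$ also vanish at every point of the line, hence identically as polynomial vectors in $t$.

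I now take the gradient of the certificate with respect to $x$ and evaluate at $x(t)$; using the vanishings above and $x_i(t) = 0$ for $i \notin S$, everything collapses to $2 B x(t) = \sum_{i \notin S} \sigma_i(x(t)) e_i + q(x(t)) \mathbf{1}$. Computing the $i$-th coordinate for $i \in S$ yields $(Bx(t))_i = 1/\alpha(G)$ (this is where $B_{lm} = 1$ is crucial), which pins down $q(x(t)) \equiv 2/\alpha(G)$. For each $i \notin S$ the relation $\sigma_i(x(t)) = 2(Bx(t))_i - 2/\alpha(G)$ is then affine in $t$ with slope $2(B_{il} - B_{im})$; but $\sigma_i \in \Sigma$ forces $\sigma_i(x(t)) \geq 0$ for all $t \in \mathbb{R}$, and a nonnegative affine polynomial must be constant. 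Hence $B_{il} = B_{im}$ for every $i \notin S$. Since $B_{il} \geq 1$ precisely when $i \in N_G(l)$, and $B_{il} = 0$ otherwise (and similarly for $m$), this forces $N_G(l) \setminus S = N_G(m) \setminus S$; combined with the equalities $N_G(l) \cap S = \{m\}$ and $N_G(m) \cap S = \{l\}$ (which hold because $S$ is stable in $G \setminus \{l,m\}$), I conclude $N_G(l) \setminus \{m\} = N_G(m) \setminus \{l\}$, making $(l,m)$ a twin pair and contradicting the hypothesis. The main obstacle is the transition from pointwise vanishings on the interval to polynomial identities in $t$, and then isolating the \emph{affine} structure of $\sigma_i(x(t))$ for $i \notin S$; once that structure is available, the no-twin-pair assumption closes the argument cleanly.
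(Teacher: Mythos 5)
Your proof is correct and follows essentially the same route as the paper: it exploits the segment of minimizers $t\mapsto \frac{1}{\alpha(G)}\bigl(t\,\chi^{S\cup\{l\}}+(1-t)\chi^{S\cup\{m\}}\bigr)$ created by the critical edge with $B_{lm}=1$, forces the SOS multipliers and their gradients to vanish along it, pins down $q\equiv \frac{2}{\alpha(G)}$ there, and derives that some $\sigma_j$ restricted to the segment is a nonconstant affine function of $t$, contradicting nonnegativity. The only differences are cosmetic: the paper shifts variables and compares constant and linear coefficients instead of evaluating and differentiating along the curve, and it invokes the no-twin-pair hypothesis up front by choosing $j\in N_G(l)\setminus N_G(m)$, whereas you conclude at the end that $(l,m)$ would have to be a twin pair.
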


\begin{proof}
The `if' part follows from Theorem \ref{finite-acritical}.
For the `only if' part we proceed by contradiction as follows. Assume there is a critical edge $\{l,m\}$ such that $B_{lm}=1$. We assume, moreover, that $f_{G,B}^{(r)}$ has finite convergence, that is, there exist $\sigma$, $\sigma_i\in \Sigma$ for $i\in [n]$, and $q\in \mathbb{R}[{\bf{x}}]$ such that 
\begin{align}\label{aux-lasserre}
x^T Bx-\frac{1}{\alpha(G)}=\sigma + \sum_{i=1}^nx_i\sigma_i + q\Big(\sum_{i=1}^nx_i-1\Big).
\end{align}
Since the edge $\{l,m\}$ is critical, we observe that there exists $S\subseteq V$ such that $S\cup\{l\}$ and $S\cup\{m\}$ are stable of size $\alpha(G)$ in $G$. By Theorem \ref{theorem-minimizers}, for $t\in (0,1)$, the vector 
$$u_t=\frac{1}{\alpha(G)}(t\chi^{S\cup\{l\}} + (1-t)\chi^{S\cup\{m\}})$$
 is an optimal solution of problem (\ref{motzkin-straus-p}), i.e., $u_t^T Bu_t=\frac{1}{\alpha(G)}$ and $u_t\in \Delta_n$.  We evaluate relation (\ref{aux-lasserre}) at $x+u_t$ and we obtain 
\begin{align}\label{contra}
x^T Bx + 2x^T Bu_t = \sigma(x+u_t) + \sum_{i=1}^n(x+u_t)_i\sigma_i(x+u_t) + q(x+u_t)(\sum_{i=1}^nx_i).
\end{align}
Now, we will reach a contradiction by comparing coefficients at both sides in relation (\ref{contra}). First, since there is no constant term on the left hand side the constant term on the right hand side is equal to zero. That is, 
$$\sigma(u_t) + \frac{1}{\alpha(G)}\sum_{s\in S}\sigma_s(u_t) + \frac{t}{\alpha(G)}\sigma_l(u_t) + \frac{1-t}{\alpha(G)}\sigma_m(u_t)=0.$$
This implies that, for any $t\in (0,1)$, the polynomials $\sigma(x+u_t)$, and $\sigma_i(x+u_t)$ (for $i\in S\cup\{l,m\}\}$) do not have a constant term and therefore do not have linear terms. Now, we compare the coefficient of $x_s$, where $s\in S$. On the left hand side of (\ref{contra}), it is equal to 
$$2\sum_{i\in S\cup\{l,m\}}B_{si}(u_t)_i= 2B_{ss}(u_t)_s=\frac{2}{\alpha(G)}.$$
 On the right hand side of (\ref{contra}), the polynomials $\sigma(x+u_t)$ and $(x+u_t)_i\sigma_i(x+u_t)$ for $i\in S\cup\{l,m\}$ have no linear term, and for $i\in [n]\setminus (S\cup\{l,m\})$, the polynomials $(x+u_t)_i\sigma(x+u_t)$ are divisible by $x_i$. Hence, the coefficient of $x_s$ is $q(u_t)$. Therefore, $q(u_t)=\frac{2}{\alpha(G)}$. Let $j\in [n]$ be such that $j\in N_G(l)$ and $j\notin N_G(m)$. Here, we use that $l$ and $m$ are not twin nodes (we switch $l$ and $m$ if necessary). We compare the coefficient of $x_j$ at both sides of (\ref{contra}). In the left hand side, the coefficient of $x_j$ is $2\sum_{i\in S\cup\{l,m\}}B_{ij}(u_t)_i = \frac{2}{\alpha(G)}B_{lj}t + \frac{2}{\alpha(G)}\sum_{i\in S}B_{ij}$. Finally, On the right hand side, the coefficient of $x_j$ is $\sigma_j(u_t)+q(u_t)=\sigma_j(u_t)+\frac{2}{\alpha(G)}$. Hence, we obtain
$$ \sigma_j(u_t)= \frac{2}{\alpha(G)}B_{lj}t + \frac{2}{\alpha(G)}\sum_{i\in S}B_{ij} -\frac{2}{\alpha(G)}.$$
This is a contradiction because $\sigma_j(u_t)$ is a sum of squares of polynomials in $t$, while the polynomial in the right hand has degree 1 (since $B_{lj}\geq 1$).
\end{proof}

\subsection*{Finite convergence for the Motzkin-Straus formulation (\ref{motzkin-straus})}
Now we show Theorem \ref{main-2}. Namely, we characterize the graphs $G$ for which the Lasserre hierrarchy $f_G^{(r)}$ (of problem (\ref{motzkin-straus})) has finite convergence. For this, we first show that the finite convergence of the hierarchy $f_G^{(r)}$ is not affected by contracting twin pairs. 
\begin{lemma}\label{lemma-twin}
Let $G=\{[n], E\}$ be a graph such that $(1,2)$ is a twin pair. The hierarchy $f_G^{(r)}$ has finite convergence if and only if the hierarchy $f_{G\setminus 1}^{(r)}$ has finite convergence. 
\end{lemma}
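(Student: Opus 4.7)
\medskip

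\textbf{Plan of proof.} The plan rests on two simple observations: (a) when $(1,2)$ is a twin pair, $\alpha(G\setminus 1)=\alpha(G)$ (a maximum stable set of $G$ containing $1$ can be replaced by one containing $2$, using that $N_G(1)\setminus\{2\}=N_G(2)\setminus\{1\}$); and (b) writing $y=(y_2,\dots,y_n)$ for the variables of $G\setminus 1$ and expanding quadratic forms using $N:=N_G(1)\setminus\{2\}=N_G(2)\setminus\{1\}$, one gets the polynomial identity
\begin{align*}
y^T(A_{G\setminus 1}+I)y\;\Big|_{y_2:=x_1+x_2,\;y_i:=x_i\;(i\geq 3)}\;=\;x^T(A_G+I)x,
\end{align*}
together with $\sum_{i=2}^n y_i -1 \;\mapsto\; \sum_{i=1}^n x_i -1$ under the same substitution. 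This is because the edges of $G$ incident to $\{1,2\}$ contribute $(x_1+x_2)^2+2(x_1+x_2)\sum_{j\in N}x_j$, matching the contribution of vertex $2$ in $G\setminus 1$ after the substitution.

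\textbf{Forward direction.} Assume $f_G^{(r)}$ has finite convergence, so there exist $\sigma,\sigma_1,\dots,\sigma_n\in\Sigma$ and $q\in\R[x]$ with appropriate degree bounds such that
\begin{align*}
x^T(A_G+I)x-\tfrac{1}{\alpha(G)}\;=\;\sigma+\sum_{i=1}^n x_i\sigma_i+q\Bigl(\sum_{i=1}^n x_i-1\Bigr).
\end{align*}
I would substitute $x_1=0$. Since SOS is preserved under substitution of a variable by a constant, $\sigma|_{x_1=0}$ and each $\sigma_i|_{x_1=0}$ remain SOS (with no increase in degree); the $x_1\sigma_1$ term vanishes; and by observation~(a) and the identity at $x_1=0$, the left-hand side becomes $y^T(A_{G\setminus 1}+I)y-\tfrac{1}{\alpha(G\setminus 1)}$. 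This yields a Lasserre certificate of the same order $r$ for $G\setminus 1$.

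\textbf{Reverse direction.} Assume $f_{G\setminus 1}^{(r)}$ has finite convergence, with certificate
\begin{align*}
y^T(A_{G\setminus 1}+I)y-\tfrac{1}{\alpha(G)}\;=\;\tau+\sum_{i=2}^n y_i\tau_i+p\Bigl(\sum_{i=2}^n y_i-1\Bigr).
\end{align*}
I would apply the linear substitution $y_2\mapsto x_1+x_2$, $y_i\mapsto x_i$ for $i\geq 3$. Since any linear substitution of variables preserves the SOS property and does not increase the degree, $\tau$ and each $\tau_i$ remain SOS after substitution. The only subtle term is $y_2\tau_2$, which becomes $(x_1+x_2)\tau_2(\dots)=x_1\tau_2(\dots)+x_2\tau_2(\dots)$, and is hence split between the slots for the constraints $x_1\geq 0$ and $x_2\geq 0$, keeping both SOS multipliers equal and of correct degree. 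By observation~(b), the left-hand side transforms into $x^T(A_G+I)x-\tfrac{1}{\alpha(G)}$ and the ideal generator maps correctly, producing a Lasserre certificate of order $r$ for $G$.

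\textbf{Main obstacle.} There is no genuinely hard step: everything reduces to checking that the polynomial identity in observation~(b) is correct and that the linear substitutions preserve membership in the relevant truncated quadratic module and ideal. The only thing to be careful about is bookkeeping of the degree bounds (that linear substitution preserves degree) and verifying that the cross term $(x_1+x_2)\tau_2$ splits cleanly into the $x_1$- and $x_2$-slots of the quadratic module for $G$. Both are routine once the symmetric algebraic identity is written out.
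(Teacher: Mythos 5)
Your proposal is correct and follows essentially the same route as the paper: one direction by setting $x_1=0$ in the certificate for $G$, the other by the substitution $y_2\mapsto x_1+x_2$ in the certificate for $G\setminus 1$, using the identity $f_G(x_1,x_2,\dots,x_n)=f_{G\setminus 1}(x_1+x_2,x_3,\dots,x_n)$ and splitting $(x_1+x_2)\tau_2$ into the $x_1$- and $x_2$-slots. Your write-up is in fact slightly more careful than the paper's about degree preservation and the explicit verification of the quadratic-form identity.
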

\begin{proof}
Clearly, we have that $\alpha(G\setminus 1)=\alpha(G)$. We now show that $f_G^{(r)}=\frac{1}{\alpha(G)}$ if and only if $f_{G\setminus 1}^{(r)}=\frac{1}{\alpha(G)}$. 

The `only if' part follows directly by replacing $x_1=0$ in the definition of $f_G^{(r)}$ (see relation (\ref{las-motzkin})). Assume now that $f_{G\setminus1}^{(r)}=\frac{1}{\alpha(G)}$. Then, we have
\begin{align}\label{eq-twin}
x^T(A_{G\setminus 1}+I)x-\frac{1}{\alpha(G)}=\sigma + \sum_{i=2}^{n}x_i\sigma_i + q(\sum_{i=2}^nx_i-1),
\end{align}
for some $\sigma_0, \sigma_2, \dots, \sigma_n\in \Sigma_r$, $q\in \mathbb{R}[x]$.
Now, we set 
$$f_G=x^T(A_G+I)x.$$
Observe that  the following equality holds:
$$f_G(x_1, x_2, \dots)=f_{G\setminus 1}(x_1+x_2, x_3, \dots, x_n).$$
Then, by replacing $x_1$ by $x_1+x_2$ in relation (\ref{eq-twin}) we obtain 
$$x^T(A_{G}+I)x-\frac{1}{\alpha(G)}=\tilde{\sigma} + (x_1+x_2)\tilde{\sigma_2}+\sum_{i=3}^{n}x_i\tilde{\sigma_i} + \tilde{q}(\sum_{i=1}^nx_i-1),$$
for some $\tilde{\sigma}, \tilde{\sigma_2},\dots, \tilde{\sigma_n}\in \Sigma_r$, $\tilde{q}\in \mathbb{R}[x]$. This shows $p_G^{(r)}=\frac{1}{\alpha(G)}$.
\end{proof}

Then, Theorem \ref{main-2} follows directly from Theorem \ref{theo-finite-convergence-w} and Lemma~\ref{lemma-twin}. We repeat the statement for the sake of clarity. 
\begin{repthm}{main-2}
Let $G$ be a graph. The Lasserre hierarchy of problem (\ref{motzkin-straus}) has finite convergence if and only if the graph obtained after contracting all twin pairs of $G$ does not have critical edges .
\end{repthm}

 \section{Complexity result}\label{sec-complexity}
 In this section, we show the main result of the paper (Theorem \ref{ref}). Namely, we resolve the complexity status of the problem \textbf{FINITE-CONV}, defined as follows.
\\
\\
\textbf{FINITE-CONV:} Given a polynomial optimization problem (POP), does the corresponding Lasserre hierarchy $f^{(r)}$ has finite convergence to $f^*$?
\\

We will show that (FINITE-CONV) is NP-hard, already for standard quadratic programs. For this, we will consider the class of problems (\ref{motzkin-straus-p}), where \linebreak $B\in \mathcal{B}(G)$ and $G$ is a graph. As mentioned in the introduction, the formulation by Motzkin and Straus for $\frac{1}{\alpha(G)}$ has already been used for resolving the complexity status of several problems in continuous optimization.  

\subsection{Linear programs}
Consider first the case when (POP) is a linear optimization problem of the form
\begin{align}\label{linear-opt}\tag{L-P}
p^*=\inf \{ c^Tx: Ax\geq b, Dx=f\},
\end{align}
where $A\in \mathbb{R}^{n\times m}$, $D\in \mathbb{R}^{n\times l}$, $b\in \mathbb{R}^m$, $f\in \mathbb{R}^l$, $c\in \mathbb{R}^n$. Let $a_i$ (for $i\in [m])$ and $d_i$ (for $i\in [l])$ denote, respectively,  the rows of the matrices $A$ and $D$. Then, problem (\ref{linear-opt}) reads

\begin{align}\tag{L-P}
\begin{split}
p^*=\inf \Big \{c^T x : & a_i^T x - b_i\geq 0 \text{ for } i\in[m]
\\ & d_i^Tx- f_i=0 \text{ for } i\in [l] \Big \}.
\end{split}
\end{align}
We show that for the class of problems (L-P) (FINITE-CONV) can be solved in polynomial time. Observe that the first level of the Lasserre sum-of-squares hierarchy for problem (\ref{linear-opt}) finds its optimum $p^*$.
Indeed, the first level of the hierarchy $p^{(1)}$ reads
\begin{align*}
p^{(1)}=\sup_{\substack{\lambda \in \mathbb{R}, \alpha\in \mathbb{R}_+^m \\ \beta\in \mathbb{R}^l}} \{\lambda : c^T x -\lambda =  \sum_{i=1}^m\alpha_i(a_i^T x -b_i) + & \sum_{i=1}^l \beta_i(d_i^T-f_i)\}
\end{align*}
By equating coefficients at both sides we obtain that this problem reads 
\begin{align*}
p^{(1)}=\sup_{\alpha\in \mathbb{R}^n_+, \beta\in \mathbb{R}^l} \{\alpha^Tb + \beta^Tf: \quad c= A^T\alpha + D^T\beta \}.
\end{align*}

Note that this is precisely the dual linear program of (\ref{linear-opt}). Hence, finite convergence always holds for linear programs, already at level $r=1$.

\subsection{Hardness in standard quadratic programs}

We now show that the problem (FINITE-CONV) is NP-hard already for the class of standard quadratic programs. Our approach consists of using the results from Section \ref{sec-characterization}, combined with the fact that deciding whether an edge is critical in a graph without twin pairs is an NP-hard problem. We consider the following two problems. 
\vspace{0.1cm}
\\
\textbf{CRITICAL-EDGE*:} Given a graph $G=(V,E)$ without twin pairs and an edge $e\in E$, is $e$ a critical edge of $G$?
\\
\vspace{0.1cm}
\\
\textbf{STABLE-SET:} Given a graph $G$ and $k\in \mathbb{N}$, does $\alpha(G)\geq k$ hold?
\\

The problem STABLE-SET is well-known to be NP-Complete \cite{karp}. From this, we now prove that unless P=NP there is no polynomial-time algorithm for solving CRITICAL-EDGE*. \footnote{In a previous work with Monique Laurent \cite{lv1} we show that the problem of deciding whether an edge is critical in a graph is NP-hard. Here, we extend this argument to the problem CRITICAL-EDGE*, which has the additional condition that $G$ has no twin pairs. }

\begin{thm}\label{theo-find-critical}
If there is a polynomial-time algorithm that solves the problem \linebreak {\rm CRITICAL}-{EDGE*}, then P=NP.
\end{thm}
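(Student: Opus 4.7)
The approach is to reduce from STABLE-SET: given $(G, k)$, I would construct in polynomial time a twin-pair-free graph $G^*$ with a distinguished edge $e$ such that $e$ is critical in $G^*$ if and only if $\alpha(G) \geq k$. Since STABLE-SET is NP-hard, this yields the NP-hardness of CRITICAL-EDGE*. As the footnote notes, the analogous reduction allowing twin pairs in the output was carried out in \cite{lv1}; the present task is to refine the gadget so that no twin pair is created.

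First I would reduce to inputs $(G, k)$ where $G$ itself has no twin pairs. If $(i, j)$ is a twin pair of $G$, then $\alpha(G \setminus j) = \alpha(G)$: every maximum stable set contains at most one of $i, j$, and any that contains $j$ can have $j$ replaced by $i$, using $N_G(i)\setminus\{j\} = N_G(j)\setminus\{i\}$, to avoid $j$ entirely. Iteratively contracting twin pairs runs in polynomial time and preserves $\alpha$, so I may assume $G$ is twin-pair-free.

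Next, the construction. I would set $V(G^*) := V(G) \cup \{u, v, z_u, z_v, w_1, \dots, w_{k-1}\}$ and $e := \{u, v\}$. Beyond the edges of $G$ and $e$ itself, I would add: $u$ adjacent to $z_u$ and to each $w_i$; $v$ adjacent to $z_v$ and to each $w_i$; each $w_i$ and both $z_u, z_v$ adjacent to every vertex of $V(G)$; and the set $\{z_u, z_v, w_1, \ldots, w_{k-1}\}$ pairwise non-adjacent. Analyzing a stable set $T$ of $G^*$ according to its intersection with $\{u, v, z_u, z_v\}$ should give
\begin{align*}
\alpha(G^*) &= \max\bigl(k+1,\; \alpha(G) + 1\bigr), \\
\alpha(G^* \setminus e) &= \max\bigl(k+1,\; \alpha(G) + 2\bigr),
\end{align*}
where the $k+1$ term arises from $\{z_u, z_v, w_1, \ldots, w_{k-1}\}$, and the $\alpha(G) + 2$ term from $\{u, v\} \cup S$ with $S$ a maximum stable set of $G$ (a stable configuration only after deleting $e$). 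Comparing the two maxima, $e$ is critical in $G^*$ if and only if $\alpha(G) \geq k$.

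The main obstacle, and the point distinguishing this from the reduction in \cite{lv1}, is verifying that $G^*$ itself has no twin pairs. The gadget is engineered precisely for this: the distinct private vertices $z_u, z_v$ render $(u, v)$ non-twin (their neighborhoods minus each other differ in one vertex); the $w_i$'s are pairwise non-adjacent and hence cannot participate in any twin pair; and for every remaining edge one checks that the endpoints have distinguishable neighborhoods. For instance, for an inherited edge $\{a, b\} \subseteq V(G)$ we have $N_{G^*}(a)\setminus\{b\} = (N_G(a)\setminus\{b\}) \cup \{z_u, z_v, w_1, \ldots, w_{k-1}\}$, which equals $N_{G^*}(b)\setminus\{a\}$ only if $(a, b)$ was a twin pair in $G$ — which we excluded by the preprocessing step. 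Consequently, a polynomial-time algorithm for CRITICAL-EDGE* would decide STABLE-SET in polynomial time, forcing $P = NP$.
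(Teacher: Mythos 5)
Your proposal is correct, but it proves the theorem by a genuinely different route than the paper. The paper gives a \emph{Turing} reduction: assuming a polynomial-time algorithm $A$ for CRITICAL-EDGE*, it computes $\alpha(G)$ exactly by repeatedly (a) deleting one vertex of any twin pair (which preserves $\alpha$) and (b) once the graph is twin-pair-free, querying $A$ on an arbitrary edge and deleting that edge, recording whether $\alpha$ increased; when no edges remain, $\alpha$ of the resulting edgeless graph is its number of vertices, and bookkeeping recovers $\alpha(G)$. You instead give a many-one (Karp) reduction from STABLE-SET via an explicit gadget $G^*$ with $\alpha(G^*)=\max(k+1,\alpha(G)+1)$ and $\alpha(G^*\setminus e)=\max(k+1,\alpha(G)+2)$, so that $e$ is critical iff $\alpha(G)\ge k$; I verified the stable-set case analysis and the twin-pair-freeness checks, and they go through (the private vertices $z_u,z_v$ and the twin-pair preprocessing of $G$ are exactly what is needed). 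Your approach yields the formally stronger conclusion that CRITICAL-EDGE* is NP-hard under many-one reductions, at the cost of designing and verifying a gadget; the paper's adaptive oracle argument is shorter and needs no gadget, but only establishes the ``poly-time algorithm implies P=NP'' form of the statement (which is all the paper claims). One small imprecision: pairwise non-adjacency of the $w_i$ only rules out twin pairs \emph{among} the $w_i$; since each $w_i$ is adjacent to $u$, $v$, and all of $V(G)$, those edges still need the neighborhood check you sketch afterwards (and which does succeed, e.g.\ $z_u\in N_{G^*}(u)\setminus\{w_i\}$ but $z_u\notin N_{G^*}(w_i)\setminus\{u\}$).
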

\begin{proof}
Assume there is a poly-time algorithm A for solving CRITICAL-EDGE*. We show that we can find the stability number of an arbitrary graph in polynomial time. Let $G$ be a graph. We can check whether $G$ has twin pairs in polynomial time by checking each pair of nodes and their set of neighbors. If there is a twin pair $(u,v)$, then update the graph $G\to G\setminus u$ by deleting the node $u$, and we have $\alpha(G)=\alpha(G\setminus u)$. We repeat the procedure until $G$ has no twin pairs. Then, we take an edge $e\in E$ and, using the algorithm A, we check if $e$ is critical in $G$. We update graph $G\to G\setminus e$ by deleting the edge $e$, for which $\alpha(G\setminus e)=\alpha(G)$ if $e$ is not critical and $\alpha(G\setminus e)=\alpha(G)+1$ if $e$ is critical. We stop if the graph has no edges. This process is going to finish since at any step we delete a node or an edge. The algorithm will finish with a graph $\tilde{G}$ with $|V|-d$ nodes, where $d$ is the number of node deletions done in the process. Then, we have $\alpha(\tilde{G})= |V|-d=\alpha(G)+ c$, where $c$ is the number of times we found a critical edge at the edge deletion step. Hence, we can compute $\alpha(G)$ in poly-time using algorithm A.
\end{proof}
We will now use this complexity result to settle the complexity status of problem FINITE-CONV. For this, let $G=(V, E)$ be a graph without twin pairs and let $e\in E$. Recall problem (\ref{ms-e}) we considered already in the Introduction:
\begin{align}\label{motzkin-e}\tag{M-S-$e$}
\frac{1}{\alpha(G)}= \min \Big\{ x^T(A_G + I +A_{G\setminus e})x : x\in \Delta_n\Big\}.
\end{align}
This is a problem of the form (\ref{motzkin-straus-p}) with $B=A_G+I+A_{G\setminus e}$. As direct application of Theorem \ref{theo-finite-convergence-w} we obtain Proposition \ref{main-e}. We repeat the statement for clarity.

\begin{reppro}{main-e}
Assume $G$ has no twin pairs and let $e$ be an edge of $G$. The Lasserre hierarchy of problem (\ref{ms-e}) has finite convergence if and only if the edge $e$ is not critical in $G$.
\end{reppro}

Combining Theorem \ref{theo-find-critical} and Proposition \ref{main-e}, we obtain the result of Theorem \ref{ref}, which forms the main contribution of this paper. We repeat the statement for the sake of clarity. 
 \begin{repthm}{ref}
The problem of deciding whether the Lasserre hierarchy of a polynomial optimization problem has finite convergence is NP-hard. 
\end{repthm}

\section*{Acknowledgements}

I thank Monique Laurent for the useful discussions and her valuable comments about the presentation of this work.  This work is supported by the European Union's Framework Programme for Research and Innovation Horizon
2020 under the Marie Skłodowska-Curie Actions Grant Agreement No. 813211  (POEMA) and by the Swiss National Science Foundation project n. 200021$\_$207429 / 1 ``Ideal Membership Problems and the Bit Complexity of Sum of Squares Proofs''. Most of this reasearch was carried out when the author was with Centrum Wiskunde \& Informatica, Amsterdam.

\end{document}